\numberwithin{equation}{section}
\theoremstyle{plain}
\newtheorem{thm}{Theorem}[section]
\newtheorem{prop}[thm]{Proposition}
\newtheorem{lemm}[thm]{Lemma}
\newtheorem*{acknowledgements}{Acknowledgements}
\theoremstyle{remark}
\newtheorem{rema}[thm]{Remark}
\newcommand{\bq}{\begin{equation}}
\newcommand{\eq}{\end{equation}}
\providecommand{\abs}[1]{\left\lvert#1\right\rvert}
\providecommand{\norm}[1]{\left\lVert#1\right\rVert}
\providecommand{\ip}[1]{\langle#1\rangle}
\providecommand{\iFT}[1]{\mathcal{F}^{-1}\left(#1\right)}
\renewcommand{\div}{{\rm div}}
\newcommand{\Z}{\mathbb{Z}}
\newcommand{\R}{\mathbb{R}}
\address{Department of Mathematics, Princeton University, Princeton, NJ 08544, USA}
\email{tme2@math.princeton.edu}
\address{Courant Institute of Mathematical Sciences, 251 Mercer Street, New York
10012 NY, USA}
\email{klaus@cims.nyu.edu}
\begin{document}

\title[The Dispersive SQG equation and the inviscid Boussinesq system]{Sharp decay estimates for an anisotropic linear semigroup and applications to the SQG and inviscid Boussinesq systems}
\author{Tarek M. Elgindi and Klaus Widmayer}
\subjclass[2000]{76B03, 76B15, 35B35, 35Q31, 35Q35}
\keywords{dispersive surface quasi-geostrophic equation, inviscid Boussinesq system, stability}

\begin{abstract}
At the core of this article is an improved, sharp dispersive estimate for the anisotropic linear semigroup $e^{R_1t}$ arising in both the study of the dispersive SQG equation and the inviscid Boussinesq system. We combine the decay estimate with a blow-up criterion to show how dispersion leads to long-time existence of solutions to the dispersive SQG equation, improving the results obtained using hyperbolic methods. In the setting of the inviscid Boussinesq system it turns out that linearization around a specific stationary solution leads to the same linear semigroup, so that we can make use of analogous techniques to obtain stability of the stationary solution for an increased timespan.
\end{abstract}

\maketitle

\tableofcontents

\section{Introduction}

The key ingredient of this article is an improved, sharp dispersive estimate for the semigroup $e^{R_1 t}$ of the linear equation
\begin{equation*}
 \partial_t\theta-R_1\theta=0,\qquad \theta:\R^+\!\!\times\R^2\rightarrow\R.
\end{equation*}
Here, $R_1$ is the Riesz transform with Fourier multiplier with symbol $-i\frac{\xi_1}{\abs{\xi}}.$ The presence of the Riesz transform makes this an an-isotropic equation and introduces a degeneracy from the point of view of stationary phase methods. 

Equipped with such a dispersive estimate we can perturbatively treat nonlinear versions of the above equation. More specifically, here we focus on two problems of independent interest, both with quadratic nonlinearities: The dispersive surface quasi-geostrophic equation (SQG) and the inviscid Boussinesq system, discussed in the following.

\subsection{The SQG Equation}\label{ssec:sqg_intro}
We are concerned with the time of existence of solutions $\theta:\R^+\!\!\times\R^2\rightarrow\R$ to the dispersive surface quasi-geostrophic equation (SQG)
\begin{equation*}
 \partial_t\theta+u\cdot\nabla\theta=R_1\theta,
\end{equation*}
where $u=\nabla^\perp(-\Delta)^{-1/2}\theta$. This equation has been suggested as a model in geophysical fluid dynamics and may prove useful in the study of wave-turbulence interactions (see \cite{AIP_1.3141499}). As the name suggests, this fluid equation exhibits dispersion.

The inviscid and viscous SQG equations have been studied by many authors, particularly because they are expected to model the main features of $3d$ incompressible flow (see \cite{MR1304437}).  We mention only a few results from the literature: Local existence and uniqueness for $H^{2+}$ solutions and global existence of $L^2$ solutions were established in \cite{Resnick}. Blow-up criteria of Beale-Kato-Majda type were obtained in \cite{MR1304437}. In recent years, the critically dissipative SQG equation was proven to be globally well-posed in $H^s$ spaces for $s\geq 1$ by several authors using a variety of methods (\cite{MR2276260}, \cite{MR2680400}, \cite{MR2989434}, \cite{MR2586369}). Aside from local well-posedness and breakdown criteria, not much is known about the well-posedness of the inviscid SQG equation. 

On the other hand, the addition of the Riesz transform linearity $R_1\theta$ on the right hand side of the inviscid SQG equation $\partial_t\theta+u\cdot\nabla\theta=0$ allows for this problem to be studied from a perturbative point of view. This is a standard approach and has been very fruitful in the setting of dispersive partial differential equations, especially when the decay properties of the linear equation are very strong. Notice, however, that in the case of the dispersive SQG equation the Riesz transform breaks the isotropy of the equation. Apart from other difficulties this entails  that the optimal decay rate of the linear semigroup associated to the problem is slower than that of typical $2d$ problems.

The available existence theory for this equation is very basic. From the hyperbolic systems point of view the dispersive and the inviscid SQG equations have the same energy estimates, thus solutions (for either) exist a prior only for a timespan inversely proportional to the size of the initial data. A finer understanding of the dispersive properties allows us to improve this timespan. In particular, here it is crucial to obtain a sharp decay result, which has not been done before for such equations.

In comparison, other works on this type of equations have studied them in the presence of a viscous or dissipative term. This typically resolves the existence question (for small data or a large dispersion constant) and in some cases allows for the use of the dispersion to derive equations in limiting cases (see for example the case of the $\beta$-plane equation in \cite[Chapter 5]{MR2228849}, or the work of Babin et al.\ \cite{MR1752139} on the Navier-Stokes equations with Coriolis forcing). In this context the exact dispersive rate is not of key importance.

In a related work \cite{MR3048553}, Cannone, Miao and Xue proved the existence of global strong solutions to the dispersive SQG equation with supercritical dissipation. Specifically, the authors of  \cite{MR3048553} study
$$\partial_t \theta+u\cdot\nabla\theta=-\nu(-\Delta)^{\alpha}\theta+KR_{1}\theta,\qquad 0<\alpha<1,$$
and prove that for given initial data $\theta_0$ there exists $K$ large enough such that the solution is global. They use the dissipation in a crucial way, whereas our model does not have any (i.e.\ we study the case where $\nu=0$). On the other hand, the rate of dispersive decay is not central to this argument and our improvement of the dispersive estimate allows for lowering the constant $K$ for which one obtains a global solution.

On a separate note we remark that the dispersive SQG equation might be viewed as a natural generalization of the (not dispersive) Burgers-Hilbert Equation studied by Ifrim and Tataru in \cite{2013arXiv1301.1947H}. Their use of a new, modified energy method is based on the existence of a normal form. However, for the dispersive SQG equation such a transformation does not seem to be available.

\subsection{The Inviscid Boussinesq System}\label{ssec:bouss_intro}
We consider an incompressible, inviscid fluid $v:\R^+\!\!\times\R^3\rightarrow\R^3$ of variable density $\rho:\R^+\!\!\times\R^3\rightarrow\R$ under the influence of an external gravity force proportional to $\rho$ in direction $\mathbf{k}\in\R^3$, described by Euler's equation coupled to a continuity equation and an equation of state:
\begin{equation*}
\begin{cases}
 &\rho\left(\partial_t v+v\cdot\nabla v\right)=-\nabla p+g \rho \mathbf{k},\\
 &\partial_t\rho + v\cdot\nabla\rho=0,\\
 &\div \,v=0,
\end{cases}
\end{equation*}
where $p:\R^+\!\!\times\R^3\rightarrow\R$ is the fluid pressure and $g$ is a gravitational constant.

Here we will further investigate the \emph{Boussinesq approximation} of these equations, where the density variation is assumed to be small in comparison with the effects of gravity. For $2d$ flows -- choosing $\mathbf{k}=e_2$ and $g=1$ -- these equations can then be reduced to a system of scalar vorticity equations (see \cite{MR1925398}, \cite{2014arXiv1402.6499H}), the so-called \emph{inviscid Boussinesq system}:
\begin{equation}\label{eq:bouss_intro}
\begin{cases}
 &\partial_t \omega+u\cdot\nabla\omega=\partial_x\rho,\\
 &\partial_{t}\rho+u\cdot\nabla\rho=0,\\
 &u=\nabla^\perp(-\Delta)^{-1}\omega,
\end{cases}
\end{equation}
where $\omega,\rho:\R^+\!\!\times\R^2\rightarrow\R$.

The question of global well-posedness for this equation remains unresolved. From a hyperbolic systems point of view these equations bear a lot of similarity to the SQG equation. In particular, using energy methods and Beale-Kato-Majda type blow-up criteria, local existence results in various spaces have been obtained by several authors (\cite{MR1475638}, \cite{MR1711383}, \cite{MR2645152}, \cite{MR2926890}).

In the present article we are concerned with the stability of certain stationary solutions of the Boussinesq system \eqref{eq:bouss_intro} -- in one sentence, we are going to study the opposite of the Rayleigh-Bernard instability, namely the setup where warm fluid is on top of cooler fluid. We will prove long-time stability \emph{in the absence of viscosity}. Here the temperature and density are assumed to be proportionally related, so that the cooler fluid is more dense. The gravitational force is thus expected to stabilize such a density (or temperature) distribution.

The phenomenon known as Rayleigh-Bernard convection has been studied by a number of authors for many years. The idea is simple: take a container filled with water which is at rest. Now heat the bottom of the container and cool the top of the container. It has been observed experimentally and mathematically that if the temperature difference between the top and the bottom goes beyond a certain critical value, the water will begin to move and convective rolls will begin to form. This effect is called Rayleigh-Bernard instability (\cite{DrazinReid},\cite{DoeringGibbon}).  

Now, in the other case, when one cools the bottom and heats the top, it is expected that the system remains stable. In the presence of viscosity it is not difficult to prove this fact (see \cite{DoeringGibbon}). However, without the effects of viscosity (or temperature dissipation), it is conceivable for such a configuration to be unstable. Indeed, in the absence of viscosity, non-linear stability is not known to be true for most fluid equations, except in very specific cases (see, for example, \cite{VillaniMouhot}, \cite{BedrossianMasmoudi}, \cite{Elgindi}).\

In this work, we prove that a particular stationary configuration is \emph{non-linearly stable} on an interval of time which is proportional to the $4/3$-power of the inverse of the size of the perturbation. Prior to this work, the long-time existence of non-trivial solutions was unknown. We emphasize that in the viscous case, stability of the stationary solution we will study is trivial. However, in the invsicid case, even \emph{linear} stability seems to have been unknown prior to this work.

\subsection{Plan of the Article}
We start by considering the dispersive properties of the linear evolution of our problems in section \ref{sec:dispersion}. Given the lack of isotropy in the equation one might expect a slower rate of decay than for other dispersive $2d$ models. Proposition \ref{prop:disp_est} shows that this is indeed the case -- more precisely, solutions to the linear equation decay at a rate of $t^{-1/2}$ in $L^\infty$. With the help of special functions we can then show that this decay is sharp.

In section \ref{sec:long_solutions} we demonstrate how to exploit the dispersion to obtain an increased time of existence of solutions of the SQG equation. This is the content of Theorem \ref{thm:long_solutions}.

Finally we show how the same semigroup as for the SQG equation comes up in the question of stability of solutions to the Boussinesq equation (section \ref{sec:bouss}) and directly obtain a corresponding stability result (Theorem \ref{thm:bouss}).

\subsubsection*{Notation}
On $\R^2$ we shall denote by $R_j$ the Riesz transform in the variable $x_j$, $1\leq j\leq 2$, given by
\begin{equation*}
 R_jf(x)=-\int_{\R^2}i\frac{\xi_j}{\abs{\xi}}\hat{f}(\xi)e^{ix\cdot\xi}\, d\xi.
\end{equation*}
For $s\in\R$ and $1\leq p\leq\infty$ we denote by $W^{s,p}$ the inhomogeneous Sobolev space with derivatives up to order $s$ in $L^p$, and write $H^s:=W^{s,2}$, while $\dot{H}^s$ stands for the homogeneous Sobolev space of derivatives of order $s$ in $L^2$. By $\dot{B}^{a}_{b,c}$ we shall denote the homogeneous Besov space of $a$-th derivatives of the frequency localized pieces in $L^b$, summed with respect to $\ell^c$.

\section{Dispersion}\label{sec:dispersion}
In this section we consider the dispersive properties of the linear equation
\begin{equation}\label{eq:lin_dsqg}
 \partial_t \omega-R_1 \omega=0,\qquad \omega:\R^+\!\!\times\R^2\rightarrow\R.
\end{equation}
Its semigroup is given by
\begin{equation*}
 e^{R_1 t}f(x)=\int_{\R^2}\hat{f}(\xi)e^{ix\cdot\xi -it\frac{\xi_1}{\abs{\xi}}}\, d\xi=\int_{\R^2}\hat{f}(\xi)e^{it\phi(\xi)}\,d\xi,
\end{equation*}
where we denote by $\phi$ the phase\footnote{We omit the variables $x,t$ from the notation since we regard them as fixed (but arbitrary).}
\begin{equation*}
 \phi(\xi):=\frac{x}{t}\cdot\xi-\frac{\xi_1}{\abs{\xi}}.
\end{equation*}

A direct computation gives the determinant of the Hessian (in $\xi$) of $\phi$ as $-\frac{\xi_2^2}{\abs{\xi}^6}.$ The degeneracy of this along $\{\xi_2=0\}$ allows for stationary points of the phase with a degenerate Hessian. Hence we only obtain a slower rate of decay of $t^{-1/2}$ (Proposition \ref{prop:disp_est}). As we we will show further below, this decay is actually sharp (Proposition \ref{prop:sharp_decay}).

\subsection{Dispersive Estimate}
\begin{prop}\label{prop:disp_est}
 Let $f\in C^\infty_c(\R^2)$. Then
 \begin{equation}\label{eq:disp_est}
  \norm{e^{R_1 t}f}_{L^\infty}\leq C\, t^{-1/2}\norm{f}_{\dot{B}^{2}_{1,1}}
 \end{equation}
 for some constant $C>0$.
\end{prop}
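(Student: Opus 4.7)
The plan is to combine a Littlewood--Paley decomposition with a stationary-phase analysis of the frequency-localized kernels. Writing $f = \sum_j \Delta_j f$ with $\Delta_j$ the standard Littlewood--Paley projection at scale $2^j$, one has $e^{R_1 t}f = \sum_j G_j \ast \Delta_j f$ (modulo an enlarged cutoff) where
\begin{equation*}
G_j(x,t) := \int_{\R^2} \psi(2^{-j}\xi)\, e^{ix\cdot\xi - it\xi_1/|\xi|}\, d\xi.
\end{equation*}
By Young's inequality and the definition of $\dot B^2_{1,1}$, the desired estimate \eqref{eq:disp_est} follows once the uniform kernel bound $\|G_j(\cdot,t)\|_{L^\infty} \lesssim 2^{2j}\, t^{-1/2}$ is established. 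Crucially, since $\xi_1/|\xi|$ is homogeneous of degree zero, the rescaling $\xi = 2^j\eta$ gives $G_j(x,t) = 2^{2j}\, I(2^j x, t)$ with
\begin{equation*}
I(y,s) := \int_{\R^2} \psi(|\eta|)\, e^{i(y\cdot\eta - s\eta_1/|\eta|)}\, d\eta,
\end{equation*}
so the problem reduces to showing $\sup_{y\in\R^2} |I(y,s)| \lesssim s^{-1/2}$.

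To estimate $I$, I would exploit the structure of the phase $\phi_y(\eta) := y\cdot\eta - s\eta_1/|\eta|$ already recorded in the paper. Its critical points satisfy $y = (s\eta_2/|\eta|^3)(\eta_2, -\eta_1)$, so $|y| = s|\eta_2|/|\eta|^2$. When $|y|\gtrsim s$ no critical point lies in $\Supp\psi$ and in fact $|\nabla\phi_y|\gtrsim |y|$ there, so repeated integration by parts against the vector field $\nabla\phi_y/|\nabla\phi_y|^2$ yields rapid decay in $|y|$, much stronger than $s^{-1/2}$. One is therefore reduced to the regime $|y|\lesssim s$, where a critical point exists in $\Supp\psi$ but the Hessian determinant $-\eta_2^2/|\eta|^6$ degenerates as $\eta_2\to 0$.

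To deal with this degeneracy I would introduce a smooth cutoff $\chi_\delta(\eta_2)$ supported in $\{|\eta_2|\leq 2\delta\}$, write $I = I_{\text{bad}} + I_{\text{good}}$ accordingly with a parameter $\delta>0$ to be optimized, and estimate the two pieces separately. The bad piece is controlled by the Lebesgue measure of its support on the annulus $|\eta|\sim 1$, giving $|I_{\text{bad}}|\lesssim\delta$. On the good region $|\eta_2|\gtrsim\delta$ the Hessian determinant is bounded below by $c\delta^2$, so a quantitative two-dimensional stationary-phase estimate yields $|I_{\text{good}}|\lesssim s^{-1}\delta^{-1}$. Balancing via $\delta = s^{-1/2}$ produces the sought-after bound $|I(y,s)|\lesssim s^{-1/2}$ uniformly in $y$; summing over $j$ with the weight $2^{2j}$ built into the $\dot B^2_{1,1}$ norm then gives \eqref{eq:disp_est}.

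The main obstacle is to make the stationary-phase estimate on the good region rigorous and uniform: as $y$ varies, the critical point moves, and its proximity to the excised strip $\{|\eta_2|=\delta\}$ must be tracked carefully, since the constants in stationary phase typically involve higher derivatives of $\phi_y$ that also blow up near $\eta_2=0$. A convenient remedy is to further decompose the good region via a smooth bump into a small neighborhood of the critical point (if one exists there) and its complement, applying standard non-degenerate stationary phase with the $\delta^{-1}$ Hessian-dependent constant on the former, and handling the latter through integration by parts against $\nabla\phi_y/|\nabla\phi_y|^2$ while keeping track of the $\delta$-dependent lower bound on $|\nabla\phi_y|$ to ensure it does not spoil the $s^{-1/2}$ rate.
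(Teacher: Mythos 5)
Your proposal is correct and follows essentially the same route as the paper: Littlewood--Paley decomposition plus Young's inequality and the degree-zero homogeneity rescaling to reduce to a single frequency-localized kernel, then a split of the annulus into the strip $\{|\xi_2|\lesssim\delta\}$ (bounded by its measure) and its complement (non-degenerate stationary phase with Hessian determinant $\gtrsim\delta^2$), optimized at $\delta\sim t^{-1/2}$. Your additional remarks on the non-stationary regime and on making the stationary-phase constants uniform near the excised strip only flesh out a point the paper treats implicitly.
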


\begin{proof}
 We will localize $f$ in frequency (using Littlewood-Paley theory), estimate the different pieces separately and sum in the end.

\emph{1. Localization:} For $j\in\Z$ let $P_j$ be the Littlewood-Paley projectors associated to a smooth bump function $\varphi : \R^2\rightarrow\R$ with support near $\abs{\xi}\sim 1$, i.e.\ $\widehat{P_j f}(\xi)=\varphi(2^{-j}\xi)\hat{f}(\xi)$. 
 We note that by Young's convolution inequality
   \begin{equation*}
   \begin{aligned}
    \norm{e^{R_1 t}f}_{L^\infty}&=\norm{e^{R_1 t}(\sum_{j\in\Z} P_j f)}_{L^\infty}\leq \sum_{j\in\Z}\norm{e^{R_1 t}P_j f}_{L^\infty}\\
     &=\sum_{j\in\Z}\norm{\iFT{e^{-it\frac{\xi_1}{\abs{\xi}}}\varphi(2^{-j}\xi)\hat{f}(\xi))}}_{L^\infty}\\
     &=\sum_{j\in\Z}\norm{\iFT{e^{-it\frac{\xi_1}{\abs{\xi}}}\varphi(2^{-j}\xi)}\ast\iFT{\tilde{\varphi}(2^{-j}\xi)\hat{f}(\xi))}}_{L^\infty}\\
     &\leq \sum_{j\in\Z}\norm{\iFT{e^{-it\frac{\xi_1}{\abs{\xi}}}\varphi(2^{-j}\xi)}}_{L^\infty}\norm{\iFT{\tilde{\varphi}(2^{-j}\xi)\hat{f}(\xi))}}_{L^1}\\
     &=\sum_{j\in\Z}\norm{\iFT{e^{-it\frac{\xi_1}{\abs{\xi}}}\varphi(2^{-j}\xi)}}_{L^\infty}\norm{Q_j f}_{L^1},
   \end{aligned}
   \end{equation*}
 where $Q_j$ is a Littlewood-Paley projector associated to a slightly ``fattened'' bump function $\tilde{\varphi}$ (which equals $1$ on the support of $\varphi$).

 Hence it suffices to estimate the semigroup on the frequency localizers $\varphi$. To this end we notice that by a change of variables
 \begin{equation*}
  \begin{aligned}
   (e^{R_1 t}P_j)(x)&=\int_{\R^2}\varphi(2^{-j}\xi)e^{ix\cdot\xi -it\frac{\xi_1}{\abs{\xi}}}\, d\xi=2^{2j}\int_{\R^2}\varphi(\xi)e^{i(2^j x)\cdot\xi -it\frac{\xi_1}{\abs{\xi}}}\, d\xi\\
    &=2^{2j}(e^{R_1 t}P_0)(2^j x).
  \end{aligned}
 \end{equation*}
 Since we will estimate this in $L^\infty$ we can further reduce to estimating
  \begin{equation}\label{key_piece}
   \norm{e^{R_1 t}P_0}_{L^\infty}
  \end{equation}
 and our final estimate will be
 \begin{equation}\label{ineq:final_step1}
  \begin{aligned}
  \norm{e^{R_1 t}f}_{L^\infty}&\lesssim \sum_{j\in\Z} 2^{2j}\norm{e^{R_1 t}P_0}_{L^\infty}\norm{Q_j f}_{L^1}=\norm{e^{R_1 t}P_0}_{L^\infty}\sum_{j\in\Z} 2^{2j}\norm{Q_j f}_{L^1}\\
  &=\norm{e^{R_1 t}P_0}_{L^\infty}\norm{f}_{\dot{B}^{2}_{1,1}}.
  \end{aligned}
 \end{equation}

\emph{2. Estimate of \eqref{key_piece}:} 
 We want to estimate in $L^\infty$
  \begin{equation*}\label{eq:local_piece}
   e^{R_1 t}P_0(x)=\int_{\abs{\xi}\sim 1}\varphi(\xi)e^{ix\cdot\xi -it\frac{\xi_1}{\abs{\xi}}}\, d\xi = \int_{\abs{\xi}\sim 1}\varphi(\xi)e^{it\phi(\xi)}\, d\xi.  
  \end{equation*}
 To this end we recall that the determinant of the Hessian $H_\phi$ of $\phi$ is given by
 \begin{equation*}
  \textnormal{det}H_\phi(\xi)=-\frac{\xi_2^2}{\abs{\xi}^6}.
 \end{equation*}
 Next we split the domain of integration in the above integral in two regions: one is small and contains the degenerate stationary points, the other is large but free of degenerate stationary points. More precisely, for a parameter $\lambda>0$ to be chosen later we have
 \begin{equation*}
  e^{R_1 t}P_0(x)=\int_{\abs{\xi}\sim 1,\, \abs{\xi_2}\leq\lambda}\varphi(\xi)e^{it\phi(\xi)}\, d\xi+\int_{\abs{\xi}\sim 1,\, \abs{\xi_2}>\lambda}\varphi(\xi)e^{it\phi(\xi)}\, d\xi.
 \end{equation*}
 Since the integrand is a bounded function we can estimate the first term by
 \begin{equation*}\label{eq:piece1}
  \abs{\int_{\abs{\xi}\sim 1,\, \abs{\xi_2}\leq\lambda}\varphi(\xi)e^{it\phi(\xi)}\, d\xi} \lesssim \lambda.
 \end{equation*}
 For the second term we note that the phase is stationary when
 \begin{equation*}
  \nabla_\xi\phi=\frac{x}{t}+\abs{\xi}^{-2}\left(\xi_2^2,\xi_1\xi_2\right)=0,
 \end{equation*}
 so that for every $(t,x)\in\R\times\R^2$ we can have at most two stationary points. Hence by the stationary phase lemma we can bound the second term by
 \begin{equation*}\label{eq:piece2}
 \begin{aligned}
  \abs{\int_{\abs{\xi}\sim 1,\, \abs{\xi_2}>\lambda}\varphi(\xi)e^{it\phi(\xi)}\, d\xi}& \lesssim \sum_{\eta\textnormal{ stationary},\, \abs{\eta}\sim 1,\, \abs{\eta_2}>\lambda} t^{-1}\abs{\textnormal{det}H_\phi(\eta)}^{-\frac{1}{2}}\\
  &\hspace{-2cm}\lesssim \sum_{\eta\textnormal{ stationary},\, \abs{\eta}\sim 1,\, \abs{\eta_2}>\lambda} t^{-1}\frac{\abs{\eta}^3}{\abs{\eta_2}}\\
  &\hspace{-2cm}\lesssim t^{-1}\lambda^{-1}.
 \end{aligned}
 \end{equation*}
 Combining these two estimates yields the desired bound
 \begin{equation*}
  \norm{e^{R_1 t}P_0}_{L^\infty}\lesssim \lambda+t^{-1}\lambda^{-1}=t^{-\frac{1}{2}}
 \end{equation*}
 once we choose $\lambda=t^{-\frac{1}{2}}$. In conjunction with \eqref{ineq:final_step1} this concludes the proof of the proposition.
\end{proof}

\begin{rema}[Dispersion for models with $1<\alpha\leq 2$]\label{rema:alpha-disp}
The SQG equation can be viewed as the special case $\alpha=1$ of the broader class of models
\begin{equation*}
  \partial_t\theta+u\cdot\nabla\theta=\frac{\partial_1}{\abs{\nabla}^\alpha}\theta,\qquad 1\leq\alpha\leq 2,
\end{equation*}
with $u=\nabla^\perp(-\Delta)^{-\alpha/2}\theta=(-\frac{\partial_2}{\abs{\nabla}^\alpha} \theta,\frac{\partial_1}{\abs{\nabla}^\alpha} \theta).$ We note that for $\alpha=2$ this yields the well-known \emph{$\beta$-plane equation} (see e.g.\ \cite{AIP_1.3141499}).

It turns out that for the cases $\alpha>1$ the dispersion of the linear semigroups can be computed using standard stationary phase methods. More precisely, we have:
\begin{equation*}
 e^{\frac{\partial_1}{\abs{\nabla}^\alpha} t}f(x)=\int_{\R^2}\hat{f}(\xi)e^{ix\cdot\xi -it\frac{\xi_1}{\abs{\xi}^\alpha}}\, d\xi=\int_{\R^2}\hat{f}(\xi)e^{it\phi_\alpha(\xi)}\,d\xi,
\end{equation*}
where
\begin{equation*}
\phi_\alpha(\xi):=\frac{x}{t}\xi-\frac{\xi_1}{\abs{\xi}^\alpha}.
\end{equation*}
Then
\begin{equation*}
 \nabla_\xi\phi_\alpha=\frac{x}{t}+\abs{\xi}^{-\alpha-2}\left((\alpha-1)\xi_1^2+\xi_2^2,\alpha\xi_1\xi_2\right),
\end{equation*}
so that for every $(t,x)\in\R\times\R^2$ there are at most two stationary points. A direct computation then gives the determinant of the Hessian of $\phi_\alpha$ as
\begin{equation*}
 \det H_{\phi_\alpha}=\alpha^2\frac{(\alpha-1)\xi_1^2+\xi_2^2}{\abs{\xi}^{4+2\alpha}},
\end{equation*}
which is non-degenerate away from the origin.

Rescaling as in the proof of Proposition \ref{prop:disp_est} now yields
\begin{equation*}
\begin{aligned}
 e^{\frac{\partial_1}{\abs{\nabla}^\alpha} t}P_j(x)&=\int_{\R^2}\varphi(2^{-j}\xi)e^{i x\cdot\xi -it\frac{\xi_1}{\abs{\xi}^\alpha}}\,d\xi\\
 &=2^{2j}\int_{\R^2}\varphi(\xi)e^{i2^jx\cdot\xi -it 2^{j(1-\alpha)}\frac{\xi_1}{\abs{\xi}^\alpha}}\,d\xi\\
 &=2^{2j}e^{\frac{\partial_1}{\abs{\nabla}^\alpha} 2^{j(1-\alpha)} t}P_0(2^j x),
\end{aligned}
\end{equation*}
so that we obtain
\begin{equation*}
 \norm{e^{\frac{\partial_1}{\abs{\nabla}^\alpha} t}P_j}_{L^\infty}=2^{2j}\norm{e^{\frac{\partial_1}{\abs{\nabla}^\alpha} 2^{j(1-\alpha)} t}P_0}_{L^\infty}\lesssim 2^{2j} (2^{j(1-\alpha)} t)^{-1}=2^{j(1+\alpha)}t^{-1}
\end{equation*}
from the stationary phase lemma.
The full dispersive estimate thus reads
\begin{equation}\label{eq:alpha-disp-est}
 \norm{e^{\frac{\partial_1}{\abs{\nabla}^\alpha} t}f}_{L^\infty}\lesssim t^{-1}\norm{f}_{\dot{B}^{1+\alpha}_{1,1}},\quad 1<\alpha\leq 2.
\end{equation}
\end{rema}

As announced we now turn to the question of the sharpness of the dispersive estimate.

\subsection{Sharpness of the Dispersive Estimate}
It turns out that for all smooth, radial $L^1$ functions $f$ with $f(0)\neq 0$, the linear evolution $e^{Rt} f$ decays no faster than $\frac{1}{\sqrt{t}}$ in $L^\infty$:

\begin{prop}\label{prop:sharp_decay}
Let $f$ be a smooth, radial $L^1$ function with $f(0)\neq 0$. 

Then
$$e^{Rt}f(0)\sim f(0)\sqrt{\frac{2}{\pi t}}\cos(t-\frac{\pi}{4}) \quad \text{as} \quad t\rightarrow \infty.$$

\end{prop}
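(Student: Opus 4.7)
The plan is to reduce the two-dimensional oscillatory integral to a one-variable Bessel function by exploiting radial symmetry, and then invoke the classical asymptotic expansion of $J_0$.

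First I would write, using the Fourier convention of the excerpt,
\begin{equation*}
 e^{R_1 t}f(0) = \int_{\R^2} \hat{f}(\xi)\, e^{-it\xi_1/\abs{\xi}}\, d\xi.
\end{equation*}
Since $f$ is radial, so is $\hat{f}$; write $\hat{f}(\xi)=g(\abs{\xi})$ and pass to polar coordinates $\xi = r(\cos\theta,\sin\theta)$. Because $\xi_1/\abs{\xi} = \cos\theta$, the radial and angular integrations decouple:
\begin{equation*}
 e^{R_1 t}f(0) = \int_0^\infty r\,g(r)\,dr\cdot \int_0^{2\pi} e^{-it\cos\theta}\,d\theta.
\end{equation*}

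Next I would identify the angular integral with the Bessel function of the first kind of order zero, using the standard representation
\begin{equation*}
 J_0(t) = \frac{1}{2\pi}\int_0^{2\pi} e^{-it\cos\theta}\,d\theta,
\end{equation*}
and the radial integral with $f(0)$ via Fourier inversion at the origin: $f(0) = \int_{\R^2}\hat{f}(\xi)\,d\xi = 2\pi\int_0^\infty r\, g(r)\, dr$. This yields the clean identity
\begin{equation*}
 e^{R_1 t}f(0) = f(0)\, J_0(t).
\end{equation*}

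Finally I would apply the classical large-argument asymptotic expansion
\begin{equation*}
 J_0(t) = \sqrt{\frac{2}{\pi t}}\cos\bigl(t-\tfrac{\pi}{4}\bigr) + O(t^{-3/2}) \quad \text{as } t\to\infty,
\end{equation*}
which delivers the stated asymptotics. No real obstacle arises: the only delicate point is the justification of the decoupling (which needs $rg(r)\in L^1(\R^+)$), but this follows from $f\in \mathcal{S}$-type smoothness together with $f\in L^1$, since smoothness of $f$ gives decay of $\hat f$ and $f\in L^1$ gives boundedness of $\hat f$, so $rg(r)$ is integrable on $\R^+$. The hypothesis $f(0)\neq 0$ is precisely what prevents the leading coefficient from vanishing, so that the asymptotic profile $\sqrt{2/(\pi t)}\cos(t-\pi/4)$ is actually realized.
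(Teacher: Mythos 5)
Your proposal is correct and follows essentially the same route as the paper's own proof: decouple the radial and angular integrals by radial symmetry, identify the angular integral with $J_0(t)$, identify the radial integral with $f(0)$ by Fourier inversion at the origin, and conclude with the classical large-argument asymptotics of $J_0$. If anything, your bookkeeping of the factors of $2\pi$ (in the identity $J_0(t)=\frac{1}{2\pi}\int_0^{2\pi}e^{-it\cos\theta}\,d\theta$ and in $f(0)=2\pi\int_0^\infty r\,g(r)\,dr$) is more careful than the paper's, where the two compensating factors are silently omitted.
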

\begin{proof}
Let $f$ be a smooth, radial function. Then $\hat{f}$ is radial, so in particular

\begin{equation*}
 \begin{aligned}
  e^{Rt}f(0)&=\int_{\mathbb{R}^2} e^{-i\frac{\xi_1}{|\xi|}t}\hat{f}(\xi)\,d\xi=\int_{0}^\infty \hat{f}(r)r\,dr\int_{0}^{2\pi}e^{-it\cos(\vartheta)}\,d\vartheta\\
	    &=f(0)\int_{0}^{2\pi}e^{-it\cos(\vartheta)}d\vartheta.
 \end{aligned}
\end{equation*}

It turns out that $$\int_{0}^{2\pi}e^{-it\cos(\vartheta)}\,d\vartheta=J_0(t),$$ where $J_0$ is the zeroth order Bessel function of the first kind. The asymptotics of this function are well known:

\begin{lemm}[see page 364 in \cite{MR0167642}]
For $t\gg 1$ $$J_0(t)\sim \sqrt{\frac{2}{\pi t}}\cos(t-\frac{\pi}{4}).$$
\end{lemm}

This concludes the proof of the proposition. 
\end{proof}

\section{Increased Time of Existence of Solutions to the SQG Equation}\label{sec:long_solutions}
Consider the equation
\begin{equation}\label{eq:DSQG}
 \partial_t\theta+u\cdot\nabla\theta=R_1\theta
\end{equation}
for $\theta:\R^+\!\!\times\R^2\rightarrow\R$ and
$$u=\nabla^\perp(-\Delta)^{-1/2}\theta=(-R_2 \theta,R_1 \theta),$$
where $R_j$ denotes the Riesz transform in $x_j$, $j=1,2$.

Using the energy method and the Gagliardo-Nirenberg inequality one can prove the following blow-up criterion:
\begin{lemm}\label{lem:energy_ineq}
 Let $\theta$ be a solution of the dispersive SQG equation \eqref{eq:DSQG} defined on a time interval containing $[0,T]$. Then for any $s>0$ we have the bound
 \begin{equation}\label{eq:energy_ineq}
  \norm{\theta(T)}_{H^s}\leq \norm{\theta_0}_{H^s} \exp\left(c\int_{0}^T\norm{\nabla u(t)}_{L^\infty}+\norm{\nabla\theta(t)}_{L^\infty}dt\right)
 \end{equation}
 for some universal constant $c>0$.
\end{lemm}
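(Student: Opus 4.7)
The plan is a classical Sobolev energy estimate, for which the two key algebraic facts are the skew-adjointness of $R_1$ and the divergence-free condition $\div\,u=0$. First I would apply the operator $J^s:=(1-\Delta)^{s/2}$ to \eqref{eq:DSQG} and pair the result with $J^s\theta$ in $L^2$. The Riesz-transform contribution $\langle J^s R_1\theta, J^s\theta\rangle$ vanishes because the Fourier symbol $-i\xi_1/\abs{\xi}$ of $R_1$ is purely imaginary, so $R_1$ is skew-adjoint on $L^2$ and commutes with $J^s$.

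For the transport term I would decompose
\begin{equation*}
 J^s(u\cdot\nabla\theta) = u\cdot\nabla J^s\theta + [J^s,u]\cdot\nabla\theta.
\end{equation*}
The first summand integrates to zero against $J^s\theta$ once we use $\div\,u=0$ and integrate by parts. The second is controlled via the Kato--Ponce commutator estimate
\begin{equation*}
 \norm{[J^s,u]\cdot\nabla\theta}_{L^2}\lesssim \norm{\nabla u}_{L^\infty}\norm{\theta}_{H^s} + \norm{u}_{H^s}\norm{\nabla\theta}_{L^\infty},
\end{equation*}
whose proof proceeds via Littlewood--Paley decomposition and Gagliardo--Nirenberg interpolation (as alluded to in the statement of the lemma). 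Combined with $\norm{u}_{H^s}\lesssim \norm{\theta}_{H^s}$, which holds because $u=\nabla^\perp(-\Delta)^{-1/2}\theta$ arises from a Fourier multiplier of order zero, and Cauchy--Schwarz, this yields
\begin{equation*}
 \tfrac{d}{dt}\norm{\theta(t)}_{H^s}^2 \lesssim \bigl(\norm{\nabla u(t)}_{L^\infty}+\norm{\nabla\theta(t)}_{L^\infty}\bigr)\norm{\theta(t)}_{H^s}^2.
\end{equation*}
Gronwall's lemma then produces \eqref{eq:energy_ineq}.

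The only non-trivial ingredient is the Kato--Ponce commutator bound; every other step (skew-adjointness of $R_1$, cancellation of the transport term by incompressibility, Gronwall) is routine. No dispersive information enters this estimate -- it is purely a hyperbolic-type bound, which is precisely why the time of existence extracted from \eqref{eq:energy_ineq} alone is inversely proportional to the size of the data, and why the improvement later requires combining this lemma with the dispersive decay of Proposition \ref{prop:disp_est}.
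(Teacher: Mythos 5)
Your proposal is correct and follows essentially the same route as the paper: apply $J^s$, use the skew-adjointness of $R_1$ to kill the linear term, cancel the leading transport contribution via $\div\,u=0$, control the remaining commutator by a Kato--Ponce/Gagliardo--Nirenberg bound together with $\norm{u}_{H^s}\lesssim\norm{\theta}_{H^s}$, and conclude with Gr\"onwall. You merely spell out the commutator decomposition that the paper compresses into one invocation of the Gagliardo--Nirenberg inequality.
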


\begin{proof}
We note that for the operator $J^s:=(1-\Delta)^{s/2}$ we have $\norm{J^s\theta}_{L^2}\sim\norm{\theta}_{H^s}$, so it suffices to bound $J^s\theta$ in $L^2$. Applying $J^s$ to the equation we see that
$$\partial_tJ^s\theta+ J^s(u\cdot\nabla\theta)=J^s R_1\theta=R_1 J^s\theta.$$
Next we multiply by $J^s\theta$ and integrate over $\R^2$ to obtain
\begin{equation}\label{eq:JsSQG2}
 \partial_t \norm{J^s\theta}_{L^2}^2+\ip{J^s(u\cdot\nabla\theta),J^s\theta}_{L^2}=0,
\end{equation}
since for any $f\in L^2$ we have $\int f \,R_1 f=0$.

We note that $u$ is divergence-free, so
$$\ip{u\cdot\nabla (J^s\theta),J^s\theta}_{L^2}=0,$$
and thus by the Gagliardo-Nirenberg inequality
\begin{equation*}
\abs{\ip{J^s(u\cdot\nabla\theta),J^s\theta}_{L^2}}\lesssim \norm{J^s\theta}_{L^2}^2(\norm{\nabla u}_{L^\infty}+\norm{\nabla\theta}_{L^\infty}).
\end{equation*}
Combining this with \eqref{eq:JsSQG2} yields
$$\partial_t \norm{J^s\theta}_{L^2}^2\lesssim (\norm{\nabla u}_{L^\infty}+\norm{\nabla\theta}_{L^\infty})\norm{J^s\theta}_{L^2}^2,$$
so we need only appeal to Gr\"onwall's Lemma to finish the proof.
\end{proof}

We are now in the position to prove one of the main theorems of this article, thus demonstrating that solutions to the dispersive surface quasi-geostrophic equation \eqref{eq:DSQG} exist for a longer period of time than what standard hyperbolic estimates predict:
\begin{thm}\label{thm:long_solutions}
 Let $\delta>0$, $\mu>0$. If $\norm{\theta_0}_{H^{4+\delta}}, \norm{\theta_0}_{W^{3+\mu,1}}\leq\epsilon$ there exists a unique solution  $\theta\in C^0([0,T];H^{4+\delta})$ of the initial value problem
 \begin{equation}
  \begin{cases}
   &\partial_t\theta+u\cdot\nabla\theta=R_1\theta,\\
   &u=\nabla^\perp(-\Delta)^{-1/2}\theta,\\
   &\theta(0)=\theta_0\in H^{4+\delta},
  \end{cases}
 \end{equation}
 with $T\sim \epsilon^{-4/3}$.

 Moreover, for $t\in[0,T]$ we have $\norm{\theta(t)}_{H^{4+\delta}}\lesssim\epsilon$.
\end{thm}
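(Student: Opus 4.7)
The approach is a bootstrap/continuity argument coupling Lemma~\ref{lem:energy_ineq} (the blow-up criterion) with the dispersive decay of Proposition~\ref{prop:disp_est}. Local well-posedness in $H^{4+\delta}$ is standard energy-method material, so the content of the theorem is the long-time extension of the solution. The plan is to fix a large universal constant $K$ and define $T^*$ as the supremum of times $T$ on which the two a priori bounds
\[
 \text{(H)}\ \norm{\theta(t)}_{H^{4+\delta}}\leq 2\epsilon, \qquad
 \text{(D)}\ \norm{\nabla\theta(t)}_{L^\infty}+\norm{\nabla u(t)}_{L^\infty}\leq K\epsilon(1+t)^{-1/4}
\]
both hold on $[0,T]$. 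Showing both can be strictly improved for $t\leq c_0\epsilon^{-4/3}$ will yield the theorem by continuity.

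To improve (D), I would write Duhamel's formula $\theta(t)=e^{R_1 t}\theta_0-\int_0^t e^{R_1(t-s)}(u\cdot\nabla\theta)(s)\,ds$, commute $\nabla$ with the semigroup, and invoke Proposition~\ref{prop:disp_est}:
\[
 \norm{\nabla\theta(t)}_{L^\infty}\lesssim t^{-1/2}\norm{\theta_0}_{\dot{B}^{3}_{1,1}}+\int_0^t(t-s)^{-1/2}\norm{(u\cdot\nabla\theta)(s)}_{\dot{B}^{3}_{1,1}}\,ds.
\]
The linear term is $\lesssim \epsilon\, t^{-1/2}$ by the embedding $W^{3+\mu,1}\hookrightarrow\dot{B}^{3}_{1,1}$. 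For the nonlinear term, a product/Kato--Ponce estimate together with the two-dimensional embedding $L^2\cdot L^2\hookrightarrow L^1$ yields $\norm{u\cdot\nabla\theta}_{\dot{B}^{3}_{1,1}}\lesssim \norm{u}_{H^{4+\delta}}\norm{\theta}_{H^{4+\delta}}\lesssim \epsilon^2$ under (H), uniformly in time. Hence $\norm{\nabla\theta(t)}_{L^\infty}\lesssim \epsilon t^{-1/2}+\epsilon^2 t^{1/2}$ (the regime $t\lesssim 1$ being handled separately by the Sobolev embedding $H^{4+\delta}\hookrightarrow W^{1,\infty}$), which is $\leq \tfrac{K}{2}\epsilon(1+t)^{-1/4}$ as long as $\epsilon t^{3/4}$ is small. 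The bound on $\nabla u$ follows from an analogous Duhamel representation for $u$, based on $\partial_t u-R_1 u=-\nabla^\perp(-\Delta)^{-1/2}(u\cdot\nabla\theta)$ and the fact that zero-order Riesz-type multipliers are bounded on $\dot{B}^s_{1,1}$.

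To improve (H), I would substitute (D) into the blow-up inequality:
\[
 \norm{\theta(t)}_{H^{4+\delta}}\leq \epsilon\exp\Bigl(c\int_0^t K\epsilon(1+s)^{-1/4}\,ds\Bigr)\leq \epsilon\exp(c'K\epsilon\, t^{3/4}).
\]
For $t\leq c_0\epsilon^{-4/3}$ with $c_0$ sufficiently small (depending on $K$), the exponent is less than $\log(3/2)$, giving $\norm{\theta(t)}_{H^{4+\delta}}\leq \tfrac{3}{2}\epsilon$ and strictly improving (H). The continuity method then delivers $T^*\geq c_0\epsilon^{-4/3}$, and uniqueness in $C^0([0,T],H^{4+\delta})$ follows from the standard energy estimate on the difference of two solutions.

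The principal obstacle, and the only place where dispersion is actually exploited quantitatively, is the correct choice of the decay exponent. Had one posited $(1+t)^{-\alpha}$ in (D), the Duhamel contribution $\epsilon^2 t^{1/2}$ would demand $\epsilon t^{1/2+\alpha}\lesssim 1$ while the energy inequality would demand $\epsilon t^{1-\alpha}\lesssim 1$; the choice $\alpha=1/4$ equalises these two constraints and is optimal, producing the endpoint $T\sim\epsilon^{-4/3}$. Implementing this cleanly requires a product estimate in $\dot{B}^3_{1,1}$ that controls $u\cdot\nabla\theta$ by $H^{4+\delta}$-norms rather than by any propagated Besov norm (which would be considerably harder to track); this is the one nontrivial technical input beyond Proposition~\ref{prop:disp_est} and Lemma~\ref{lem:energy_ineq}.
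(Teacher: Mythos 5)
Your proposal is correct and follows essentially the same route as the paper: Duhamel's formula combined with the dispersive estimate of Proposition \ref{prop:disp_est} (and the embeddings $W^{3+\mu,1}\hookrightarrow\dot{B}^3_{1,1}$, $H^{4+\delta}\times H^{4+\delta}\rightarrow W^{3+\delta,1}$) gives $\norm{\nabla u(t)}_{L^\infty}+\norm{\nabla\theta(t)}_{L^\infty}\lesssim \epsilon t^{-1/2}+\epsilon^2 t^{1/2}$ under the bootstrap assumption $\norm{\theta}_{H^{4+\delta}}\leq 2\epsilon$, and Lemma \ref{lem:energy_ineq} then closes the argument for $T\lesssim\epsilon^{-4/3}$. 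The only cosmetic difference is your auxiliary decay ansatz at rate $(1+t)^{-1/4}$; the paper dispenses with it and inserts the Duhamel bound directly into the energy inequality, arriving at the same constraint $\epsilon T^{1/2}+\epsilon^2 T^{3/2}\lesssim 1$.
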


The proof proceeds by bounding $\norm{\nabla u}_{L^\infty}$ and $\norm{\nabla \theta}_{L^\infty}$ using the dispersion and then invoking our previous Lemma \ref{lem:energy_ineq}.

\begin{proof}
By Duhamel's Principle we have
\begin{equation*}
\theta(t)=e^{R_1 t}\theta_0+\int_{0}^te^{R_1(t-s)}\left(u(s)\cdot\nabla\theta(s)\right)ds.
\end{equation*}

For notational simplicity we introduce the two first-order pseudo-differential operators $L_1:=\nabla\nabla^\perp(-\Delta)^{-1/2}$ and $L_2:=\nabla$, for which $L_1 \theta=\nabla u$ and $L_2\theta=\nabla \theta$. 

Upon applying $L_i$ to \eqref{eq:DSQG} and noting that $e^{R_1\cdot}$ commutes with $L_i$ we get
$$L_i\theta(t)= e^{R_1t}L_i\theta_0+\int_{0}^te^{R_1(t-s)}L_i(u\cdot\nabla \theta)(s)\,ds.$$

Thus for any $\delta, \mu>0$ the decay estimate \eqref{eq:disp_est} implies
\begin{equation*}
 \begin{aligned}
  \norm{L_i\theta(t)}_{L^\infty}&\lesssim \frac{1}{\sqrt{t+1}}\norm{L_i\theta_0}_{\dot{B}^2_{1,1}}+\int_{0}^{t}\frac{1}{\sqrt{t-s+1}}\norm{L_i(u\cdot\nabla\theta)}_{\dot{B}^2_{1,1}}\,ds\\
  &\lesssim \frac{1}{\sqrt{t+1}}\norm{\theta_0}_{\dot{B}^3_{1,1}}+\int_{0}^{t}\frac{1}{\sqrt{t-s+1}}\norm{u\cdot\nabla\theta}_{\dot{B}^3_{1,1}}\,ds\\
  &\lesssim \frac{1}{\sqrt{t+1}}\norm{\theta_0}_{W^{3+\mu,1}}+\int_{0}^{t}\frac{1}{\sqrt{t-s+1}}\norm{u\cdot\nabla\theta}_{W^{3+\delta,1}}\,ds\\
  &\lesssim \frac{1}{\sqrt{t+1}}\norm{\theta_0}_{W^{3+\mu,1}}+\int_{0}^{t}\frac{1}{\sqrt{t-s+1}}\norm{\theta}_{H^{4+\delta}}^2\,ds.
 \end{aligned}
\end{equation*}

Now suppose that $\norm{\theta}_{H^{4+\delta}}\leq 2\epsilon$ on a time interval $[0,T].$ \footnote{We know that the maximal time interval $[0,T_{max}]$ where this is true has $T_{max}\geq c/\epsilon.$} Then for $t\leq T$
$$\norm{L_i \theta(t)}_{L^\infty}\leq c\epsilon\frac{1}{t^{1/2}}+\int_{0}^{t}\frac{c}{(t-s+1)^{1/2}}\epsilon^2\,ds\leq c\Big( \frac{\epsilon}{t^{1/2}}+\epsilon^2 t^{1/2}\Big)$$
for a universal constant $c>0$.

Now, using the energy inequality \eqref{eq:energy_ineq} from Lemma \ref{lem:energy_ineq} we obtain
$$\norm{\theta(t)}_{H^s} \leq \norm{\theta_{0}}_{H^s}e^{c\int_{0}^{T}\frac{\epsilon}{t^{1/2}}+\epsilon^2 t^{1/2}\;dt}= \norm{\theta_0}_{H^s}e^{c\left(\epsilon T^{1/2}+\epsilon^2 T^{3/2}\right)}.$$

Choosing $s=4+\delta$ we see that $\norm{\theta}_{H^{4+\delta}}\leq 2\epsilon$ so long as $$\epsilon T^{1/2}+\epsilon^2 T^{3/2}\lesssim 1,$$ i.e.\ so long as $T\lesssim \epsilon^{-4/3}.$ This completes the proof of the theorem. 

\end{proof}

\begin{rema}[Longer Time of Existence for models with $1<\alpha\leq 2$]
 For the related models discussed in Remark \ref{rema:alpha-disp} (with parameter $1<\alpha\leq 2$) one may go through the analogous steps of the above proof to obtain the condition $$T\left(\log(T)-1\right)\lesssim\epsilon^{-2},$$ thanks to the higher rate of dispersion \eqref{eq:alpha-disp-est}. Hence we obtain an existence time of $T\sim \frac{\epsilon^{-2}}{\abs{\log(\epsilon)}}$ for solutions in $H^{3+\alpha+\delta}$ with small initial data in $W^{2+\alpha+\mu,1}$ and $H^{3+\alpha+\delta}$ (for any given $\mu,\delta>0$). 
\end{rema}

\section{A Stability Result for the Inviscid Boussinesq System}\label{sec:bouss}

Recall from the introduction (page \pageref{ssec:bouss_intro}) the inviscid Boussinesq system
\begin{equation}\label{eq:bouss}
\begin{cases}
 &\partial_t \omega+u\cdot\nabla\omega=\partial_x\rho,\\
 &\partial_{t}\rho+u\cdot\nabla\rho=0,\\
 &u=\nabla^\perp(-\Delta)^{-1}\omega,
\end{cases}
\end{equation}
where $\omega,\rho:\R^+\!\!\times\R^2\rightarrow\R$.

In this section we draw the connection to the linear decay estimate of section \ref{sec:dispersion} by linearizing the equations. We then show how this can provide a stability result for a certain stationary solution to the Boussinesq system \eqref{eq:bouss}.

To begin we notice that $\rho(t,x,y)=f(y)$ and $u\equiv\omega\equiv 0$ is a stationary solution of the Boussinesq system for any continuously differentiable function $f\in C^1$. Here we want to study the stability of the special stratified solution with $f(y)=-y.$

\subsection{Linearized Equations}
The linearized equations around $(\rho,u,\omega)=(-y,0,0)$ read
\begin{equation}\label{eq:lin_bouss}
 \begin{cases}
  &\partial_{t}\omega-\partial_{x}\rho=0,\\
  &\partial_{t}\rho-u_{2}=0,\\
  &u_{2}=\partial_{x}(-\Delta)^{-1}\omega.
 \end{cases}
\end{equation}
Thus
$$\partial_{tt}\omega=\partial_{t}\partial_x\rho=\partial_x u_2=\partial_{xx}(-\Delta)^{-1}\omega.$$
Upon taking the Fourier transform of both sides of this equation we get
$$\partial_{tt}\hat\omega(\xi,t)=-\frac{\xi_{1}^2}{\abs{\xi}^2}\hat\omega(\xi,t),$$
the solutions of which are of the form
$$\hat\omega(\xi,t)=A(\xi)e^{-i\frac{\xi_1}{\abs{\xi}}t}+B(\xi)e^{i\frac{\xi_1}{\abs{\xi}}t},$$
from which one computes\footnote{Equivalently, one may note that the system \eqref{eq:lin_bouss} written in the variables $(\omega,\abs{\nabla}\rho)$ is diagonalized by $\omega+\abs{\nabla}\rho$ and $\omega-\abs{\nabla}\rho$ with eigenvalues $\pm i\frac{\xi_1}{\abs{\xi}}$.} $u=\nabla^\perp(-\Delta)^{-1}\omega$ and $\hat{\rho}(\xi,t)=-\abs{\xi}^{-1}A(\xi)e^{-i\frac{\xi_1}{\abs{\xi}}t}+\abs{\xi}^{-1}B(\xi)e^{i\frac{\xi_1}{\abs{\xi}}t}$. 

By Proposition \ref{prop:disp_est} this implies that solutions of the linearized problem near $(\rho,u,\omega)=(-y,0,0)$ decay at the rate of $t^{-\frac{1}{2}}$ in $L^\infty$.

\begin{rema}
 We note that the linearization around $(\rho,u,\omega)=(y,0,0)$ gives $\partial_{tt}\omega=-\partial_{xx}(-\Delta)^{-1}\omega$, so that the stationary solution $\rho=y$, $u\equiv\omega\equiv 0$ is \emph{linearly unstable}. In the presence of viscosity this instability has been studied before (see for example \cite[Chapter 8]{MR1925398}).
\end{rema}

\subsection{Nonlinear Stability}
Next we will show how this decay of the linear solutions can be used to establish the stability of the stationary solution $\rho=-y$, $u\equiv\omega\equiv 0$.

When perturbing around $(\rho,u,\omega)=(-y,0,0)$ we get the following system:
\begin{equation}\label{eq:bouss_pert}
 \begin{cases}
  &\partial_t \omega+u\cdot\nabla\omega=\partial_x\rho,\\
  &\partial_{t}\rho+u\cdot\nabla\rho=u_{2},\\
  &u=\nabla^\perp(-\Delta)^{-1}\omega.
 \end{cases}
\end{equation}

We may write this as a perturbation of the linear system \eqref{eq:lin_bouss}, namely
\begin{equation*}
 \begin{cases}
  &\partial_{t}\omega-\partial_{x}\rho=F(u,\omega),\\
  &\partial_{t}\rho-u_2=G(u,\rho),\\
  &u=\nabla^{\perp}(-\Delta)^{-1}\omega,\\
 \end{cases}
\end{equation*}
where $F=-u\cdot\nabla\omega$ and $G=-u\cdot\nabla\rho$ are quadratic.

We observe that these equations are of the same form as those in section \ref{sec:long_solutions}, so the theory developed there can be carried over directly and yields the following
\begin{thm}[Increased time of stability of $(\rho,u,\omega)=(-y,0,0)$]\label{thm:bouss}
 Let $\delta>0$, $\mu>0$, $\gamma>0$. If $\norm{\omega_0}_{H^{4+\delta}}, \norm{\omega_0}_{H^{-1}}, \norm{\omega_0}_{W^{3+\mu,1}},\norm{\rho_0}_{H^{5+\gamma}}, \norm{\rho_0}_{W^{4+\mu,1}}\leq\epsilon$, then there exist unique functions  $\rho\in C^0([0,T];H^{5+\gamma})$, $\omega\in C^0([0,T];H^{4+\delta}\cap H^{-1})$ such that the initial value problem for the Boussinesq system \eqref{eq:bouss},
 \begin{equation*}
  \begin{cases}
   &\partial_t \omega+u\cdot\nabla\omega=\partial_x\rho,\\
   &\partial_{t}\rho+u\cdot\nabla\rho=0,\\
   &u=\nabla^\perp(-\Delta)^{-1}\omega,\\
   &\omega(0)=\omega_0\in H^{4+\delta}\cap H^{-1},\\
   &\rho(0)=\rho_0\in H^{5+\gamma},
  \end{cases}
 \end{equation*}
 is solved by $(\rho -y, u,\omega)$. Here, $T\sim \epsilon^{-4/3}$.

 In particular, small perturbations in $H^{4+\delta}\cap H^{-1}$ and $H^{5+\gamma}$ for $\omega$ and $\rho$ (respectively) of the special solution $(\rho,u,\omega)=(-y,0,0)$ of \eqref{eq:bouss} remain small for times $\lesssim\epsilon^{-4/3}$.
\end{thm}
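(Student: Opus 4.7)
The proof mirrors that of Theorem \ref{thm:long_solutions}, once the Boussinesq system is rewritten so that the dispersive structure of section \ref{sec:dispersion} is manifest. Following the footnote after \eqref{eq:lin_bouss}, I set $\theta_\pm := \omega \pm \abs{\nabla}\rho$, so that the nonlinear system \eqref{eq:bouss_pert} becomes
$$\partial_t \theta_\pm \pm R_1 \theta_\pm = -u\cdot\nabla\omega \mp \abs{\nabla}(u\cdot\nabla\rho) =: N_\pm,$$
a pair of dispersive SQG-type equations with quadratic forcing (the sign of $R_1$ does not affect the $L^\infty$ decay of Proposition \ref{prop:disp_est}).

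First I would derive a Boussinesq analogue of Lemma \ref{lem:energy_ineq}. Since $R_1$ is skew-adjoint on $L^2$, an $H^s$-energy estimate on $\theta_\pm$ combined with the identity $\|\theta_+\|_{H^s}^2+\|\theta_-\|_{H^s}^2 = 2(\|\omega\|_{H^s}^2+\|\abs{\nabla}\rho\|_{H^s}^2)$ and standard Kato--Ponce commutator bounds should yield
\begin{equation*}
\partial_t\bigl(\|\omega\|_{H^s}^2+\|\rho\|_{\dot H^{s+1}}^2\bigr) \lesssim \bigl(\|\nabla u\|_{L^\infty}+\|\nabla\omega\|_{L^\infty}+\|\nabla\rho\|_{L^\infty}\bigr)\bigl(\|\omega\|_{H^s}^2+\|\rho\|_{\dot H^{s+1}}^2\bigr).
\end{equation*}
To promote $\dot H^{s+1}$ to the full $H^{s+1}$ norm required in the statement, I would separately propagate $\|\theta_\pm\|_{H^{-1}}$ by a crude energy argument; its initial size is $\lesssim \|\omega_0\|_{H^{-1}}+\|\rho_0\|_{L^2}\lesssim\epsilon$, which is precisely where the hypothesis $\|\omega_0\|_{H^{-1}}\leq\epsilon$ enters.

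Next I would mimic the Duhamel argument of Theorem \ref{thm:long_solutions}. Applying the Fourier multipliers $\nabla\nabla^\perp(-\Delta)^{-1}$ (producing $\nabla u$ from $\omega$), $\nabla$ (producing $\nabla\omega$) and $\nabla\abs{\nabla}^{-1}$ (producing $\nabla\rho$ from $\abs{\nabla}\rho$) to
\begin{equation*}
\theta_\pm(t) = e^{\mp R_1 t}\theta_\pm(0) + \int_0^t e^{\mp R_1(t-s)} N_\pm(s)\,ds,
\end{equation*}
commuting them past $e^{\mp R_1 t}$, and invoking Proposition \ref{prop:disp_est} together with algebra-type product bounds of the form $\|u\cdot\nabla\omega\|_{\dot B^2_{1,1}}+\|u\cdot\nabla\rho\|_{\dot B^3_{1,1}}\lesssim \|\omega\|_{H^{4+\delta}}^2+\|\rho\|_{H^{5+\gamma}}^2$, I get, under the bootstrap assumption $\|\omega\|_{H^{4+\delta}}+\|\rho\|_{H^{5+\gamma}}\le 2\epsilon$,
\begin{equation*}
\|\nabla u(t)\|_{L^\infty}+\|\nabla\omega(t)\|_{L^\infty}+\|\nabla\rho(t)\|_{L^\infty} \lesssim \frac{\epsilon}{\sqrt{t+1}}+\epsilon^2\sqrt{t+1}.
\end{equation*}
Here the initial-data hypotheses $\|\omega_0\|_{W^{3+\mu,1}},\|\rho_0\|_{W^{4+\mu,1}}\leq \epsilon$ bound $\|\theta_\pm(0)\|_{\dot B^2_{1,1}}$, the relevant norm for Proposition \ref{prop:disp_est}, the extra derivative on $\rho$ accounting for the $\abs{\nabla}$ in $\theta_\pm$. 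Feeding this back into the energy inequality and using Gr\"onwall closes the bootstrap as long as $\epsilon T^{1/2}+\epsilon^2 T^{3/2}\lesssim 1$, i.e.\ for $T\sim \epsilon^{-4/3}$.

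The main obstacle I anticipate is the $\abs{\nabla}(u\cdot\nabla\rho)$ term in $N_\pm$: because the density nonlinearity enters the dispersive equation with an extra derivative, both the dispersive and the energy estimates require one additional derivative of regularity on $\rho$ compared to $\omega$, which explains the asymmetry in the hypotheses ($H^{5+\gamma}$ vs.\ $H^{4+\delta}$ and $W^{4+\mu,1}$ vs.\ $W^{3+\mu,1}$). Carefully matching all these regularity indices so that the product estimates, the dispersive gain, and the $L^\infty$ norms required in the energy inequality line up consistently is the only delicate bookkeeping step.
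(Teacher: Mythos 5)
Your proposal is correct and follows essentially the same route as the paper: diagonalize the linearized system into SQG-type dispersive equations (exactly the change of variables $\omega\pm\abs{\nabla}\rho$ indicated in the paper's footnote), prove a Boussinesq analogue of the energy/blow-up estimate of Lemma \ref{lem:energy_ineq} at matched regularities ($\dot H^s$ for $\omega$, $\dot H^{s+1}$ for $\rho$, down to $s=-1$, which is where the $H^{-1}$ hypothesis enters), and then run the Duhamel-plus-bootstrap argument of Theorem \ref{thm:long_solutions} with the dispersive bound of Proposition \ref{prop:disp_est} to close for $T\sim\epsilon^{-4/3}$. The only cosmetic difference is that the paper performs the energy estimate directly in the variables $(\omega,\rho)$, letting the linear cross terms cancel by integration by parts and summing the homogeneous estimates over $s\geq-1$ to recover the inhomogeneous norms, whereas you phrase the same computation in the diagonalized variables $\theta_\pm$ and handle the low frequencies by a separate $H^{-1}$ propagation; these are equivalent at the paper's level of detail.
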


\begin{rema}
 Given that $\omega$ is the vorticity of the fluid under consideration the requirement that $\norm{\omega_0}_{H^{-1}}\leq \epsilon$ is physically sensible: It corresponds to the fluid having small kinetic energy.
\end{rema}

\begin{proof}
 Essentially the proof is the same as that of Theorem \ref{thm:long_solutions}, so we only go over the two key ingredients: The dispersive estimate and the blow-up criterion.

  \emph{Dispersive Estimate.} As observed above, writing the equations for $(\rho -y, u,\omega)$ gives the system \eqref{eq:bouss_pert}, the linear part \eqref{eq:lin_bouss} of which is
 \begin{equation}\label{eq:lin_bouss2}
  \begin{cases}
   &\partial_{t}\omega-\partial_{x}\rho=0,\\
   &\partial_{t}\rho-\partial_{x}(-\Delta)^{-1}\omega=0,
  \end{cases}
 \end{equation}
 and shares the decay properties of the SQG equation, hence the analogous estimates
 \begin{equation*}
  \norm{(\abs{\nabla}\rho,\omega)(t)}_{L^\infty}\lesssim t^{-1/2}\norm{(\abs{\nabla}\rho_0,\omega_0)}_{\dot{B}^{2}_{1,1}}.
 \end{equation*}

 \emph{Blow-Up Criterion.} We proceed in analogy with the SQG case: For $s\geq -1$ we take $s$ derivatives of the first, and $s+1$ derivatives of the second equation in \eqref{eq:lin_bouss2}, multiply by $\abs{\nabla}^s\omega$ and $\abs{\nabla}^{s+1}\rho$ respectively, and integrate over $\R^2$ to obtain
  \begin{equation*}
   \begin{aligned}
    \frac{1}{2}\partial_t\norm{\abs{\nabla}^s\omega}_{L^2}&=\ip{\abs{\nabla}^s\partial_x\rho,\abs{\nabla}^s\omega}_{L^2}-\ip{\abs{\nabla}^s(u\cdot\nabla\omega),\abs{\nabla}^s\omega}_{L^2},\\
    \frac{1}{2}\partial_t\norm{\abs{\nabla}^{s+1}\rho}_{L^2}&=\ip{\abs{\nabla}^{s+1}\partial_{x}(-\Delta)^{-1}\omega,\abs{\nabla}^{s+1}\rho}_{L^2}-\ip{\abs{\nabla}^{s+1}(u\cdot\nabla\rho),\abs{\nabla}^{s+1}\rho}_{L^2}.
   \end{aligned}
  \end{equation*}
 We add these two equalities to get
 \begin{equation*}
   \frac{1}{2} \partial_t\left(\norm{\omega}_{\dot{H}^s}+\norm{\rho}_{\dot{H}^{s+1}}\right)=-\ip{\abs{\nabla}^s(u\cdot\nabla\omega),\abs{\nabla}^s\omega}_{L^2}-\ip{\abs{\nabla}^{s+1}(u\cdot\nabla\rho),\abs{\nabla}^{s+1}\rho}_{L^2},
 \end{equation*}
 since by integration by parts
\begin{equation*}
 \ip{\abs{\nabla}^s\partial_x\rho,\abs{\nabla}^s\omega}_{L^2}+\ip{\abs{\nabla}^{s+1}\partial_{x}(-\Delta)^{-1}\omega,\abs{\nabla}^{s+1}\rho}_{L^2}=0.
\end{equation*}
But from this it follows by Gagliardo-Nirenberg estimates and Gr\"onwall's Lemma (as in the proof of Lemma \ref{lem:energy_ineq}) that
\begin{equation*}
\begin{aligned}
 \norm{\omega(T)}_{\dot{H}^s}+\norm{\rho(T)}_{\dot{H}^{s+1}}&\lesssim \left(\norm{\omega_0}_{\dot{H}^s}+\norm{\rho_0}_{\dot{H}^{s+1}}\right)\times\\
  &\quad\exp\left(c\int_0^T\norm{\nabla u(t)}_{L^\infty}+\norm{\nabla\rho(t)}_{L^\infty}\;dt\right).
\end{aligned}
\end{equation*}
Summation now gives the bound in inhomogeneous Sobolev spaces.
\end{proof}

\begin{acknowledgements}
 The authors would like to thank Pierre Germain for his helpful comments.
\end{acknowledgements}

\bibliographystyle{plain}
\bibliography{sqgrefs.bib}

\end{document}